\documentclass[11pt]{amsart}
\usepackage{mathtools}
\usepackage{amssymb}
\usepackage[margin=1in]{geometry}
\usepackage[utf8]{inputenc}
\usepackage{mathtools}
\usepackage{cancel}
\usepackage[pdfdisplaydoctitle=true,
            colorlinks=true,
            urlcolor=blue,
            citecolor=blue,
            linkcolor=blue,
            pdfstartview=FitH,
            pdfpagemode= UseNone,
            bookmarksnumbered=true]{hyperref}
\vfuzz2pt 
\hfuzz2pt 
\newtheorem{theorem}{Theorem}

\newtheorem{lemma}[theorem]{Lemma}
\newtheorem{proposition}[theorem]{Proposition}
\theoremstyle{definition}

\theoremstyle{remark}
\newtheorem{remark}[theorem]{Remark}

\newcommand{\RR}{\mathbb{R}}

\newcommand{\norm}[1]{\left\Vert#1\right\Vert}

\mathtoolsset{showonlyrefs}
\begin{document}

\title{Global Lagrangian Solutions of the Camassa-Holm equation}
\author[J. Lee]{Jae Min Lee}
\address{Department of Mathematics, The Graduate Center, City University of New York, NY 11106, USA}
\email{jlee10@gradcenter.cuny.edu}

\subjclass[2010]{35Q35, 53D25}%
\keywords{Camassa-Holm equation, global weak conservative solution, global smoothness of Lagrangian trajectories.}%
\date{\today}

\maketitle 

\begin{abstract}
In this paper we construct global weak conservative solutions of the Camassa-Holm equation on the periodic domain. We first express the equation in Lagrangian flow variable $\eta$ and then transform it using a change of variable $\rho=\sqrt{\eta_x}$. The new variable removes the singularity of the CH equation, and we obtain both the global weak conservative solution and global spatial smoothness of the Lagrangian trajectories of the CH equation. This work is motivated by J. Lenells who proved similar results for the Hunter-Saxton equation using the geometric interpretation.
\end{abstract}

\section{Introduction}

The Camassa-Holm(CH) equation
$$u_t-u_{txx}+3uu_x-uu_{xxx}-2u_x u_{xx}=0,\;\;t\ \in \RR,\;x \in S^1.$$
was originally derived as a model for shallow water waves and has remarkable properties like infinitely many conservation laws via bi-hamiltonian structures \cite{AK1999, CH1993, Con2, FF1981} and soliton-like solutions \cite{BSS2000, ConM1999, ConStra, ConStra2}. Also, the solutions of this equation can be interpreted as geodesics of the right invariant Sobolev $H^1$ metric on the diffeomorphism group on the circle \cite{EM1970, KM2003, K2007, Mis1998}. In fact, in terms of the diffeomorphism $\eta$ defined by the flow equation
\begin{equation}\label{flow}
\frac{\partial \eta}{\partial t}=u(t,\eta(t,x)),
\end{equation}
the CH equation is rewritten as an ODE on the Banach space and the local well-posedness of the corresponding Cauchy problem is obtained by using the standard ODE technique.

It is known that some solution $u(t,x)$ of the CH equation develops a finite time singularity \cite{M1, Con, ConE2, ConE3}. The mechanism for this breakdown is called a wave breaking; the solution remains bounded  but its slope becomes unbounded at the breakdown time. Wave breaking can be nicely illustrated in terms of peakon-antipeakon interaction. When these two waves collide at some time, the combined wave forms an infinite slope. After this collision, there are two possible scenarios; either two waves pass through each other with total energy preserved, or annihilate each other with a lose of energy. The solutions in the former case is called conservative and the latter case is called dissipative. In this paper, we focus on the conservative solution case.

The continuation of the solutions after wave breaking has been studied extensively. There are different situations where such solutions are constructed; type of domain(periodic/non-periodic), vanishing/non-vanishing asymptotics in the case of non-periodic domain, energy preservation after the breakdown(conservative/dissipative), etc. We will list some previous known results on the global weak solutions by their distinct approaches. Bressan-Constantin \cite{BC1, BC2} and Holden-Raynaud \cite{HR1,HR2,HR3,HR4,HR5} obtained global weak solutions by reformulating the CH equation into a semilinear system of ODEs after introducing a new set of independent and dependent variables. Another approach was taken by Xin-Zhang \cite{XZ1} using the limit of viscous approximation. Bressan-Fonte \cite{BF} defined a Lipschitz distance functional to extract global weak solutions as the uniform limit of multi-peakon solutions. Grunert-Holden-Raynaud \cite{GHR1,GHR2} defined the new Lipschitz metric that is consistent with the construction of the solutions as in \cite{HR1} and \cite{HR4}. The authors also studied the aspects of global conservative solutions of the CH equation with nonvanishing asymptotics \cite{GHR3} and as a limit of vanishing density in the two-component CH system \cite{GHR4}.


In this paper, we construct global weak conservative solutions of the CH equation by using a simple change of variables on the Lagrangian variable $\eta$:
\begin{equation}\label{rho}
\rho=\sqrt{\eta_x}.
\end{equation}
This idea is motivated by Lenells \cite{ L2} who constructed global weak conservative solutions of the Hnter-Saxton(HS) equation
$$u_{txx}+2u_x u_{xx}+uu_{xxx}=0,\;\;t\ \in \RR,\;x \in S^1.$$
The author used the geometric interpretation that the HS equation describes the geodesic flow on the $L^2$ sphere via \eqref{rho}. In particular, the geodesic remains on the $L^2$ sphere for all time and this makes it possible to continue the geodesic in the weak sense after the blowup time. Since both CH and HS equations form singularities via wave breaking, it is natural to expect that the same kind of weak continuation holds true in the CH equation. We will show that the transformation \eqref{rho} removes the wave breaking singularities in the CH equation case as well, even though we don't have exactly the same geometric picture and explicit formula as in HS equation case.

In the geometric perspective, wave breaking is described by the particle trajectory $\eta$ forming a horizontal tangent as it evolves in time. This means that the geodesic flow hits the boundary of the diffeomorphism group. The CH equation written in $\rho$ variables has a global solution from which we can reconstruct the flow $\eta$ defined by
\begin{equation}\label{eta}
\eta(t,x):=\int_0^x \rho^2(t,y) dy+c(t),\;\text{where $c(t)$ is some function of time,}
\end{equation}
in the space of absolutely continuous functions. This flow is a weak geodesic for almost all time since the spatial derivative of $\eta$ vanishes on a set of measure zero for the most time. The idea is that $\rho$ can assume both signs and $\rho$ passes through the axis whenever it vanishes. The sign change of $\rho$ ensures that if $\rho$ vanishes at a point $x_0$ at some time $T$, $\rho(t,x_0)$, as a function of a time, does not vanish on the punctured neighborhood of $T$. It is certainly possible that $\rho$ vanishes at different places as it evolves, but the imporatant point is that the flow $\eta$ remains a homeomorphism for almost all time. This is precisely how the singularity of the CH equation is removed by introducing the new Lagrangian variable $\rho$. 

Our construction of the global weak solution shows that the spatial smoothness of the Lagrangian trajectories $\eta$ in \eqref{eta} is completely determined by the smoothness of $\rho$, which is dependent on the smoothness of the initial condition(see Proposition 3). This is an interesting phenomenon of the CH equation observed by McKean \cite{M2}; the solution of the CH equation experiences the jump discontinuity of its slope even for the smooth initial data, but the Lagrangian trajectory $\eta$ is spatially smooth for all time. We will prove that the Lagrangian flow $\eta$ is in $C^k$ for all time whenever the initial condition $u_0$ is in $C^k$. Our result improves the McKean's by showing the exact correspondence between the smoothness of the initial condition and the smoothness of the Lagrangian flow. Also, our approach does not use the complete integrability and explicit formula for the solutions of the CH equation. 

The outline of the paper is following. In Section 2, we write the CH equation in $\rho$ variables defined by \eqref{rho}. Then we will regard the resulting equation as an abstract ODE independent of the derivation and prove that solution exists globally. Using this global solution in $\rho$ variables, in Section 3 we will construct global weak conservative solutions of the original CH equation. In particular, we obtain the same global results for the CH equation, which is due to Bressan-Constantin \cite{BC1}, in much simpler way. Also, by using the estimates we already have, we can improve the result of McKean \cite{M2} on the persistence of the smoothness of Lagrangian trajectories. Finally, Section 4 contains some conclusions and remarks. The main theorems of this paper are the following:
\begin{theorem}
The Cauchy problem for the periodic Camassa-Holm equation
\begin{equation}\label{wCH}
\left\{
  \begin{array}{l l}
       u_t+uu_x=-(1-\partial_x^2)^{-1}\partial_x\left(u^2+\frac{u_x^2}{2}\right)\\
       u(0,x)=u_0(x)
   \end{array} \right.
\end{equation}
has a global solution $u \in C(\RR_+, H^1(S^1)) \cap \mathrm{Lip}(\RR_+,L^2(S^1))$. The solution is weak in the sense that the equality in the equation \eqref{wCH} is satisfied in the distributional sense. Also, the solution $u$ is conservative; $\norm{u(t)}_{H^1}=\norm{u_0}_{H^1}$ for $t \in \RR_+$ almost everywhere.
\end{theorem}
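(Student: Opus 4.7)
The plan is to lift the globally defined Lagrangian solution $\rho(t,x)$ of the $\rho$-ODE constructed in Section~2 to a weak solution $u(t,\cdot)$ of \eqref{wCH}. For $u_0 \in H^1(S^1)$, the corresponding Cauchy data for the $\rho$-ODE are $\rho(0,\cdot)\equiv 1$ and $\rho_t(0,\cdot) = \tfrac{1}{2}u_0'$, arising from $\eta(0,\cdot)=\id$ and $\eta_t(0,\cdot)=u_0$. Section~2 then produces a global $\rho(t,\cdot)$, from which I define the flow $\eta$ by \eqref{eta} (with $c(t)$ fixed by a convenient normalization) and set $u(t,y):=\eta_t(t,\eta^{-1}(t,y))$ at each time $t$ for which $\eta(t,\cdot)$ is a homeomorphism of $S^1$.

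For the regularity and conservation claims, the central identity is the Lagrangian-to-Eulerian change of variables
\[
\norm{u(t)}_{H^1}^2 = \int_{S^1}\bigl(\eta_t^2\,\eta_x + \eta_{tx}^2/\eta_x\bigr)\,dx = \int_{S^1}\bigl(\eta_t^2\rho^2 + 4\rho_t^2\bigr)\,dx,
\]
obtained from $\eta_x=\rho^2$ and $\eta_{tx}=2\rho\rho_t$. The right-hand side is the natural energy of the $\rho$-ODE and should be established as a conserved quantity at the abstract ODE level in Section~2, yielding $\norm{u(t)}_{H^1}=\norm{u_0}_{H^1}$. The continuity $u\in C(\RR_+,H^1)$ and Lipschitz continuity $u\in\mathrm{Lip}(\RR_+,L^2)$ then follow from the time regularity of $(\rho,\rho_t)$ furnished by Section~2, via analogous change-of-variable estimates bounding $\norm{u(t)-u(s)}_{L^2}$ in terms of $\int_s^t\norm{\rho_{tt}(\tau)}_{L^2}\,d\tau$ and similar expressions.

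To verify \eqref{wCH} in the distributional sense, I would use an approximation argument: take $u_0^n\in C^\infty(S^1)$ converging to $u_0$ in $H^1$, construct the corresponding $\rho^n,\eta^n,u^n$, and observe that $u^n$ is a classical solution of \eqref{wCH} on the time interval before any wave-breaking, where the change of variables is a genuine diffeomorphism. Then pass to the limit in the test-function formulation of \eqref{wCH}, using the uniform energy bound from Section~2 together with Sobolev embedding $H^1\hookrightarrow L^\infty(S^1)$ to control the nonlinear flux $uu_x$ and the Helmholtz convolution term $(1-\partial_x^2)^{-1}\partial_x(u^2+u_x^2/2)$.

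The main obstacle is handling the exceptional set of times on which $\rho(t,\cdot)$ possesses zeros and $\eta(t,\cdot)$ ceases to be a homeomorphism. One must verify that this set has Lebesgue measure zero in $t$ --- this is where the sign-change mechanism from the introduction enters, ensuring that the zeros of $\rho$ are isolated in $t$ for each fixed $x$ --- so that the $H^1$-conservation and the equation itself hold almost everywhere in $t$, matching the precise statement of the theorem. One must also confirm that the limit $u^n\to u$ is strong enough (for example in $L^2_{\mathrm{loc}}(\RR_+\times S^1)$) that the distributional identity survives the passage, with no spurious contributions localized on wave-breaking times. Granting these two measure-theoretic points, the theorem follows at once from the Section~2 global existence theorem and the energy identity above.
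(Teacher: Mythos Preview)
Your overall framework---build $u$ from the global $\rho$, identify the Eulerian $H^1$-energy with the conserved quantity $\int(\rho^2 G^2+4\rho_t^2)\,dx$, and handle the exceptional times via the sign-change/Gr\"onwall argument---matches the paper exactly. Where you diverge is in the verification of the distributional identity: you propose an approximation by smooth data $u_0^n$, whereas the paper argues directly.

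The approximation route has a circularity problem. For smooth $u_0^n$ the solution $u^n$ is classical only up to its own wave-breaking time $T_n$; past $T_n$ the object $u^n$ is itself merely a weak solution constructed from $\rho^n$, and establishing that $u^n$ satisfies \eqref{wCH} distributionally on all of $\RR_+\times S^1$ is precisely the theorem you are trying to prove, just for a special initial datum. Smoothness of $u_0^n$ buys you nothing here, because the $\rho$-ODE is already globally well-posed for $L^2$ data and the difficulty is entirely at the breaking times, which persist under smoothing. So either the approximation is circular, or you first prove the distributional identity for $u^n$ directly---at which point the same direct argument works for $u$ and the approximation is superfluous.

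The paper's direct argument is short: introduce the Lagrangian pair $(K,G)$ with $K_x=\rho^2$, $G=K_t$, show (via the explicit $\cosh$/$\sinh$ kernel for $\Lambda^{-1}$ and the change of variables $y\mapsto K(y)$ on the set $A=\{K_x>0\}$) that the $\rho$-equation is literally the statement $G_t=-\bigl\{\partial_x\Lambda^{-1}[(G\circ K^{-1})^2+\tfrac12(G_x/K_x\circ K^{-1})^2]\bigr\}\circ K$ for a.e.\ $t$, and then compute
\[
\int (u_t+uu_x)\phi\,dx\,dt=\int U_t K_x\,(\phi\circ K)\,dx\,dt
\]
by a single integration by parts in the Lagrangian variables (using $(UK_x\phi\circ K)_t-(U^2\phi\circ K)_x=U_tK_x\phi\circ K-UU_x\phi\circ K+UK_x\phi_t\circ K$), after which one more change of variables recovers $-\int p_x\phi\,dx\,dt$. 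No limiting procedure is needed. Your identification of the measure-zero-times lemma is correct and is exactly what makes the change of variables on $A$ legitimate for a.e.\ $t$.
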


\begin{theorem}
Let $\eta(t,x)$ be the Lagrangian flow of the weak solution $u$ of the CH equation \eqref{wCH}. Then $\eta$ is spatially absolutely continuous for all time. Furthermore, if $\eta$ is initially $C^k$, then $\eta$ remains $C^k$ for all time.
\end{theorem}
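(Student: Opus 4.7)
The plan is to push every assertion about $\eta$ back onto the global solution $\rho$ produced in Section~2 via the reconstruction formula $\eta(t,x) = \int_0^x \rho^2(t,y)\,dy + c(t)$ from equation~(3). Absolute continuity will then be automatic once $\rho(t,\cdot)\in L^2(S^1)$ is controlled, while $C^k$-persistence will follow from a bootstrap argument for the Lagrangian ODE in $(\rho,v)$ variables.

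For the first part of the theorem I would invoke the global existence and the conserved $H^1$-energy of $u$ already established (or rather their $\rho$-counterpart), which force $\|\rho(t,\cdot)\|_{L^2(S^1)}$ to be uniformly bounded in $t$. Then $\rho^2(t,\cdot)\in L^1(S^1)$, and equation~(3) exhibits $\eta(t,\cdot)$ as the indefinite Lebesgue integral of an $L^1$ function plus a constant; the Lebesgue differentiation theorem then gives absolute continuity together with $\eta_x=\rho^2$ a.e.

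For the $C^k$-statement, introduce the Lagrangian velocity $v(t,x):=u(t,\eta(t,x))$ and view $(\rho,v)$ as satisfying the coupled ODE
\begin{align*}
\rho_t &= \tfrac12\,(u_x\circ\eta)\,\rho,\\
v_t &= -P(u)\circ\eta,
\end{align*}
where $P(u)=(1-\partial_x^2)^{-1}\partial_x(u^2+\tfrac12 u_x^2)$. After converting the Eulerian convolution in $P$ into a Lagrangian one by the substitution $y=\eta(z)$, $dy=\rho^2(z)\,dz$, and using the identity $(u_x\circ\eta)\,\rho=2\rho_t$, the right-hand side becomes a functional of $\rho$, $v$, $\rho_t$ alone, together with the smooth Green kernel of $(1-\partial_x^2)^{-1}$ evaluated at $(\eta(x),\eta(z))$. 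I would read this as an ODE on the Banach space $C^k(S^1)\times C^k(S^1)$, check that the right-hand side is locally Lipschitz there, and apply Picard--Lindel\"of to propagate $C^k$-regularity on a short time interval. Global continuation then reduces to a Gr\"onwall argument for the top-order norm, since the lower-order norms are already controlled by the global bounds from Section~2. Finally $\eta_x=\rho^2\in C^{k-1}$ gives $\eta(t,\cdot)\in C^k$ as required.

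The main technical obstacle is the nonlocal term $P(u)\circ\eta$: the Green kernel of $(1-\partial_x^2)^{-1}$ has only a corner on the diagonal, so differentiation in $x$ pushes the singularity onto the argument $\eta(x)-\eta(z)$. I expect to handle this by splitting the integral into diagonal and off-diagonal pieces, integrating by parts to absorb derivatives onto the smoother factors, and exploiting the fact that any apparent loss of regularity at points where $\rho$ vanishes is compensated by the Jacobian factor $\rho^2$ coming from the change of variables — this is precisely the mechanism, already emphasized in the introduction, by which the sign change of $\rho$ removes the wave-breaking singularity of the CH equation.
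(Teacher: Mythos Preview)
Your treatment of absolute continuity is correct and matches the paper: $\eta(t,\cdot)$ is the indefinite integral of $\rho^2(t,\cdot)\in L^1$ plus a function of $t$, hence absolutely continuous with $\eta_x=\rho^2$ a.e.

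For the $C^k$-persistence, however, your $(\rho,v)$ system does not close in a nonsingular way. Once you invoke the identity $(u_x\circ\eta)\,\rho=2\rho_t$, your first equation collapses to the tautology $\rho_t=\rho_t$ and no longer evolves $\rho$; if instead you express $u_x\circ\eta=v_x/\eta_x=v_x/\rho^2$ directly from $v=u\circ\eta$, the equation becomes $\rho_t=v_x/(2\rho)$, which is singular exactly at wave breaking ($\rho=0$). Either way the first-order ODE on $C^k\times C^k$ you describe fails precisely where the theorem is supposed to carry you through. This is not a cosmetic issue: the whole point of the substitution \eqref{rho} is to produce a second-order equation for $\rho$ with \emph{no} division by $\rho$, and any reformulation that reinstates such a division loses the global result.

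The paper's argument stays with the $(\rho,\rho_t)$ system \eqref{CH3}, namely $\rho_{tt}=\tfrac12\,\rho\,(G^2-F)$, and simply observes that this vector field is smooth and globally bounded as a map on $C^k\times C^k$. The corner of the Green kernel that you flag as the main obstacle is in fact a non-issue in that formulation: the Lagrangian nonlocal terms satisfy the exact identities $G_x=2\rho\rho_t$, $F_x=H\rho^2$, and $H_x=\rho^2F-\rho^2G^2-2\rho_t^2$, so every $x$-derivative of $G^2-F$ is again a polynomial in $\rho,\rho_t,G,F,H$ with no singular kernel and no diagonal boundary terms to manage. The estimate \eqref{estimate} therefore lifts verbatim from $L^2$ to $C^k$ norms, Wintner's theorem gives global existence of $(\rho,\rho_t)$ in $C^k\times C^k$, and the reconstruction $\eta_x=\rho^2$ then yields $\eta(t,\cdot)\in C^{k+1}$. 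Your diagonal/off-diagonal splitting and integration-by-parts program would eventually rediscover these identities, but only after the choice of variables has been repaired.
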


\section{The Setup}

In terms of the diffeomorphism $\eta$ defined by the flow equation \eqref{flow}, the Cauchy problem \eqref{wCH} can be written in $(\eta,\eta_t)$ variables as following:
\begin{equation}\label{CH2}
\left\{ 
  \begin{array}{l l}
    \eta_{tt}=-\left\{\Lambda^{-1} \partial_x \left[\left(\eta_t \circ \eta^{-1}\right)^2+\frac{1}{2}\left(\frac{\eta_{tx}}{\eta_x}\circ \eta^{-1}\right)^2\right]\right\}\circ \eta\\
     \eta(0,x)=x,\;\;\eta_t(0,x)=u_0(x)
   \end{array} \right.
\end{equation}
Here, $\Lambda^{-1}=(1-\partial_x^2)^{-1}$ is the operator defined by
$$\Lambda^{-1}u(x)=\int_{S^1}g(x-y)u(y)dy,$$
where $g(x)=\frac{\cosh(|x|-1/2)}{2\sinh(1/2)}$. We want to write this equation \eqref{CH2} in terms of the new variable $\rho$ defined by \eqref{rho}. To do this, we use the conserved quantities of the CH equation to redefine quantities that appear in the equation. We will first assume that $\norm{\rho}_{L^2}=1$ to derive the equation. Later, we will prove the local existence and uniqueness of the solution for $\rho \in L^2(S^1)$ without the constraint on the norm. In order to show that the solution is global, however, we will restrict $\rho$ back to the unit sphere.

First, the mean of the velocity $u$ of the CH equation is conserved, so $\mu=\int_0^1 u(t,x) dx=\int_0^1 u_0(x) dx$ is a constant determined by the initial condition $u_0$. By changing variables, we have
$$\mu=\int_0^1 (u \circ \eta) \cdot \eta_x dx=\int_0^1 \eta_t \eta_x\;dx=\int_0^1 \eta_t \rho^2 dx.$$
So $\eta_t$ is a function of $\mu$ and $\rho$. By integrating $\eta_{tx}=2\rho\rho_t$ in the spatial variable $x$, we get $\eta_t=\int_0^x 2\rho\rho_t dy+c(t)$ for some function of time $c(t)$. We can substitute this expression into the $\mu$ equation, and determine $c(t)=\mu-\int_0^1 \int_0^y 2\rho\rho_t dz \rho^2 dy$. Hence, we have determined $\eta_t$ completely in terms of $\mu$, $\rho$, and $\rho_t$. Denoting $\eta_t$ in new variable $G$, we have
$$G(\mu,\rho,\rho_t)(t,x):=\int_0^x 2\rho\rho_t dy+\mu-\int_0^1 \int_0^y 2\rho\rho_t dz \rho^2 dy.$$
Next, we can write $\frac{\eta_{tx}^2}{2\eta_x}=2\rho_t^2$ since $\rho_t=\frac{\eta_{tx}}{2\sqrt{\eta_x}}$.

We are now ready to rewrite the equation \eqref{CH2} in $\rho$ variables. By differentiating the equation \eqref{CH2} with respect to $x$, we get
\begin{equation}\label{xxt}
\eta_{xtt}=\left(\eta_t^2\eta_x +\frac{\eta_{tx}^2}{2\eta_x}\right)-\Lambda^{-1}\left[\left(\eta_t \circ \eta^{-1}\right)^2+\frac{1}{2}\left(\frac{\eta_{tx}}{\eta_x} \circ \eta^{-1}\right)^2\right]\circ \eta \cdot \eta_x.
\end{equation}
Here, we used the identity $\Lambda^{-1}\left(-\partial_x^2\right)=1-\Lambda^{-1}$ and the chain rule. 
By using the explicit formula for $\Lambda^{-1}$ and changing variables in the integration, we have
\begin{align}
\Lambda^{-1}\left[\left(\eta_t \circ \eta^{-1}\right)^2+\frac{1}{2}\left(\frac{\eta_{tx}}{\eta_x}\circ \eta^{-1}\right)^2\right] \circ \eta=&\int_0^1 \frac{\cosh\left(|\eta(x)-y|-\frac{1}{2}\right)}{2\sinh(1/2)}\left[\left(\eta_t \circ \eta^{-1}\right)^2+\frac{1}{2}\left(\frac{\eta_{tx}}{\eta_x}\circ \eta^{-1}\right)^2\right]dy.\\
=&\int_0^1 \frac{\cosh\left(|\eta(x)-\eta(y)|-\frac{1}{2}\right)}{2\sinh(1/2)}\left(\eta_t^2\eta_y+\frac{\eta_{ty}^2}{2\eta_y}\right)dy.
\end{align}
Since $\eta(x)-\eta(y)=\int_y^x \eta_z dz=\int_y^s \rho^2 dz$, we can determine this integral completely in terms of $\rho$, $\rho_t$, and $G$. Denoting it in new variable $F$, we have
\begin{align}\label{F}
F(\mu, \rho,\rho_t)(t,x):=\int_0^1 \frac{\cosh\left(\left\vert\int_y^x \rho^2 dz\right\vert-\frac{1}{2}\right)}{2\sinh(1/2)}(\rho^2G^2+2\rho_t^2)dy
\end{align}
Since $\eta_{xtt}-\frac{\eta_{tx}^2}{2\eta_x}=2\rho\rho_{tt}$, the equation \eqref{xxt} becomes
$$2\rho\rho_{tt}=G^2 \rho^2-\rho^2 F.$$
Dividing by $2\rho$ on both sides, we get the following Cauchy problem in $\rho$ variables:
\begin{equation}\label{CH3}
\left\{ 
  \begin{array}{l l}
    \rho_{tt}=\frac{1}{2}\rho\left(G^2-F\right)\\
     \rho(0,x)=1(\text{constant function}),\;\;\rho_t(0,x)=\frac{1}{2}u_0'(x)
   \end{array} \right.
\end{equation}
Now, we want to solve this equation by viewing it as an abstract ODE in $(\rho, \rho_t)$ variables independent of the above derivation. That is, we assume that $\rho$ and $\rho_t$ are just functions in $L^2(S^1)$ satisfying the equation \eqref{CH3}. Note that this second order equation describes the integral curve of the vector field $(\rho,\rho_t) \mapsto f(\rho,\rho_t):=\frac{1}{2}\rho\left(G^2(\rho,\rho_t)-F(\rho,\rho_t)\right)$. We can easily show that $f$ is smooth in $(\rho,\rho_t)$ since $f$ is essentially a polynomial in $\rho$ and $\rho_t$, and this implies the local existence and uniqueness of the solution.

\begin{proposition}
The system
$$\left\{ 
  \begin{array}{l l}
     \frac{d \rho}{d t}= \rho_t\\
     \frac{d \rho_t}{d t}=f(\rho,\rho_t)
   \end{array} \right.$$
with initial conditions $\rho(0,\cdot)=1$(constant function) and $\rho_t(0,\cdot)=\frac{1}{2}u_0'$ describes the flow of a $C^{\infty}$ vector field on $TL^{2}(S^1)=L^2(S^1) \times L^2(S^1)$ and the curve $(\rho,\rho_t)$ exists for some time $T>0$.
\end{proposition}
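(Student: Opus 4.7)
The plan is to reduce the proposition to an application of the abstract Picard--Lindel\"of theorem on the Banach space $L^2(S^1)\times L^2(S^1)$. Writing the system as an autonomous ODE with right-hand side $X(\rho,\rho_t)=(\rho_t,f(\rho,\rho_t))$, the first component is a continuous linear projection and hence $C^\infty$, so the entire task reduces to showing that $f$ is a $C^\infty$ map $L^2\times L^2\to L^2$.

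First I would analyze $G$. By Cauchy--Schwarz, $(\rho,\rho_t)\mapsto\rho\rho_t$ is a continuous bilinear map $L^2\times L^2\to L^1$, and the antiderivative $h\mapsto\int_0^{\cdot}h(y)\,dy$ is continuous linear $L^1\to C(S^1)$. Each summand of
\[
G=\int_0^x 2\rho\rho_t\,dy+\mu-\int_0^1\!\!\int_0^y 2\rho\rho_t\,dz\,\rho^2\,dy
\]
is therefore a polynomial of bounded degree in continuous multilinear forms of $(\rho,\rho_t)$, so $(\rho,\rho_t)\mapsto G$ is $C^\infty$ with values in $C(S^1)\subset L^\infty(S^1)$; the same holds for $G^2$.

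The analysis of $F$ is more delicate because of the absolute value inside the hyperbolic cosine, which would prevent a naive composition argument. To bypass this, I would split the integration at $y=x$: for $y\leq x$ one has $\abs{\int_y^x\rho^2\,dz}=\int_y^x\rho^2\,dz$, and for $y>x$ one has $\abs{\int_y^x\rho^2\,dz}=\int_x^y\rho^2\,dz$, so in each piece the kernel becomes $g_+(A(x)-A(y))$ or $g_+(A(y)-A(x))$, where $g_+(z):=\cosh(z-1/2)/(2\sinh(1/2))$ is smooth on $\RR$ and $A(\rho)(x):=\int_0^x\rho^2\,dz$ is a smooth quadratic map $L^2\to C(S^1)$. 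The integrand $\rho^2 G^2+2\rho_t^2$ lies in $L^1(S^1)$ and depends smoothly on $(\rho,\rho_t)\in L^2\times L^2$ by the previous step. Formal differentiation under the integral at each order yields expressions controlled on bounded sets by $\|g_+^{(k)}\|_{L^\infty}\|\delta A\|_{C(S^1)}^k\|H\|_{L^1}$, which gives Fr\'echet differentiability to all orders and produces $F:L^2\times L^2\to L^\infty(S^1)$ of class $C^\infty$.

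Combining these, $G^2-F\in L^\infty(S^1)$ depends smoothly on $(\rho,\rho_t)$, and the continuous bilinear product $L^2\times L^\infty\to L^2$ then shows that $f(\rho,\rho_t)=\frac{1}{2}\rho(G^2-F)$ is a $C^\infty$ map $L^2\times L^2\to L^2$. Picard--Lindel\"of on the Banach space $L^2\times L^2$ then furnishes a unique local integral curve starting at $(1,\frac{1}{2}u_0')$, which lies in $L^2\times L^2$ whenever $u_0\in H^1(S^1)$. I expect the main obstacle to be the non-smoothness of $g$ at the origin: a direct composition of $g$ with $\int_y^x\rho^2\,dz$ would already fail to be $C^1$, so the reorganization of $F$ via splitting at $y=x$ is the essential move that makes every regularity check go through.
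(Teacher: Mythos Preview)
Your proposal is correct and follows the same strategy as the paper: verify that $f$ is a $C^\infty$ vector field on $L^2(S^1)\times L^2(S^1)$ and then invoke Picard--Lindel\"of. Your treatment is in fact more careful than the paper's brief sketch, which simply differentiates $F$ and $G$ formally without commenting on the kink of $\cosh(|z|-\tfrac12)$ at $z=0$; the splitting of the $y$-integral at $y=x$ that you use to bypass this is precisely the device the paper employs elsewhere (e.g.\ in the proof that the flow stays tangent to the unit sphere).
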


\begin{proof}
We first compute the variational derivative in $\rho_t$.
$$\frac{\partial f}{\partial \rho_t}=\frac{1}{2}\rho\left(2G\frac{\partial G}{\partial \rho_t}-\frac{\partial F}{\partial \rho_t}\right).$$
Note that $G$ is linear in $\rho_t$ so it is smooth with respect to $\rho_t$. Also, for $\psi \in L^2(S^1)$,
$$\frac{\partial F}{\partial \rho_t}(\psi)=\int_0^1 \frac{\cosh\left(\left\vert \int_y^x \rho^2 dz\right\vert-\frac{1}{2}\right)}{2\sinh(1/2)}\left(2G\frac{\partial G}{\partial \rho_t}+2\rho_t \psi\right)dy,$$
and we can check that $\frac{\partial F}{\partial \rho_t}$ is in $L^2$ and continuous with respect to $\rho_t$. By repeating differentiation, we can see that $F$ is also smooth with respect to $\rho_t$. Similarly, we can show that $f$ is smooth with respect to $\rho$.
\end{proof}

In order to show that the smooth curve desribed by the equation \eqref{CH3} exists for all time, we need to restrict the base point $\rho$ to be on the unit sphere $U:=\{\rho \in L^2(S^1)\;:\;\norm{\rho}_{L^2}=1\}$. The resaon why we need this constraint is because $F$ and $G$ are periodic on $S^1$ only if $\int_0^1 \rho^2 dx=1$. Also, this condition is necessary for the weak geodesic flow for the CH equation defined spatially on $S^1$. We first prove that $\rho_t$ remains perpendicular to $\rho$ for all time when $\rho$ is constrained on the sphere.

\begin{proposition}
Let $f(\rho,\rho_t)$ be the vector field on $TL^2(S^1)$ defined by the equation \eqref{CH3}. If we restrict $\rho$ to be on the unit sphere $U$, then $f$ restricts on the tangent bundle $TU$.  
\end{proposition}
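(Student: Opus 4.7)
The plan is to characterize $TU$ as a submanifold of $L^2(S^1)\times L^2(S^1)$ cut out by the two constraints $\|\rho\|_{L^2}^2 = 1$ and $\langle \rho, v\rangle_{L^2} = 0$, and then verify that the ODE vector field $(\rho, v)\mapsto (v, f(\rho,v))$ is tangent to $TU$ at every point of $TU$. Differentiating the two constraints, tangency amounts to $\langle \rho, v\rangle_{L^2} = 0$ and $\|v\|_{L^2}^2 + \langle \rho, f(\rho, v)\rangle_{L^2} = 0$. The first holds automatically on $TU$; the second, with $v=\rho_t$, reduces via the definition of $f$ to the single identity
$$\int_0^1 \rho^2 F\,dx \;=\; \int_0^1 \rho^2 G^2\,dx \;+\; 2\|\rho_t\|_{L^2}^2.$$

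The key observation is that the cosh kernel appearing in \eqref{F} is exactly $g(\eta(x)-\eta(y))$, where $g$ is the Green's function of $\Lambda$ (extended periodically) and $\eta$ is the Lagrangian flow determined by $\eta_x = \rho^2$ as in \eqref{eta}. Under the normalization $\|\rho\|_{L^2}^2 = 1$, this $\eta$ is a bijection of $S^1$ with Jacobian $\rho^2$. Writing $\int_0^1 \rho^2 F\,dx$ as a double integral and applying Fubini, I would change variables $z = \eta(x)$ in the inner $x$-integral to obtain
$$\int_0^1 g(\eta(x)-\eta(y))\,\rho^2(x)\,dx \;=\; \int_0^1 g(z-\eta(y))\,dz \;=\; \int_0^1 g(z)\,dz \;=\; 1,$$
where the last two equalities use translation invariance of the periodic integral together with the elementary computation $\int_0^1 g\,dx = 1$ (direct evaluation of the cosh antiderivative). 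Substituting back leaves the unweighted integral $\int_0^1(\rho^2 G^2 + 2\rho_t^2)\,dy$, which is exactly the required right-hand side.

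The main obstacle is really just the bookkeeping of the change of variable; no analytic subtlety arises since $\rho,\rho_t\in L^2$ and the kernel is bounded, so Fubini is justified throughout, and $\eta$ is a genuine homeomorphism of $S^1$ precisely because of the $L^2$-sphere constraint (this is the whole reason the constraint is needed here). As a sanity check against the initial data in \eqref{CH3}: $\rho(0,\cdot)\equiv 1$ has $L^2$-norm one, and $\langle\rho(0),\rho_t(0)\rangle_{L^2} = \tfrac12\int_0^1 u_0'\,dx = 0$ by periodicity, so $(\rho(0),\rho_t(0))\in TU$ and the restricted flow is well-posed to start.
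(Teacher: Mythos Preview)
Your proposal is correct and takes essentially the same route as the paper: both reduce to the identity $\int_0^1 \rho^2 F\,dx = \int_0^1(\rho^2 G^2 + 2\rho_t^2)\,dx$, apply Fubini, and show the resulting inner $x$-integral equals $1$ using $\|\rho\|_{L^2}=1$. Your evaluation of that inner integral via the substitution $z=\eta(x)$ together with $\int_{S^1} g = 1$ is just a cleaner repackaging of the paper's explicit $\cosh$/$\sinh$ antidifferentiation (and in fact only requires $\eta$ to be monotone and absolutely continuous with $\eta(1)-\eta(0)=1$, not a genuine homeomorphism as you assert).
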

\begin{proof}
We want to show that $\int_0^1 \rho \rho_t dx=0$ for all time. When $t=0$, we have $\int_0^1 \rho(0,x)\rho_t(0,x)dx=\int_0^1 \frac{1}{2}u_0'(x)dx=0$ since $u_0$ is periodic. Next, we can compute
$$\frac{d}{dt}\int_0^1 \rho\rho_t dx=\int_0^1 \rho_t^2+\rho\rho_{tt}dx=\int_0^1 \rho_t^2+\frac{1}{2}\rho^2(G^2-F)dx.$$
We claim that $\int_0^1 \rho^2 F dx=2\rho_t^2+\rho^2 G^2$. Note that
\begin{align*}
\int_0^1 \rho^2 F dx=&\int_0^1 \rho^2 \int_0^x \frac{\cosh\left(\int_y^x \rho^2 dz-\frac{1}{2}\right)}{2\sinh(1/2)}(\rho^2 G^2+2\rho_t^2)dydx\\
&+\int_0^1 \rho^2 \int_x^1\frac{\cosh\left(-\int_y^x \rho^2 dz-\frac{1}{2}\right)}{2\sinh(1/2)}(\rho^2G^2+2\rho_t^2)dy dx\\
=&(I)+(II).
\end{align*}
By changing the order of integration, we have
\begin{align*}
(I)=&\int_0^1 (\rho^2 G^2+2\rho_t^2)\int_y^1 \rho^2 \frac{\cosh\left(\int_y^x \rho^2 dz-\frac{1}{2}\right)}{2\sinh(1/2)} dx dy\\
(II)=&\int_0^1 (\rho^2 G^2+2\rho_t^2)\int_0^y \rho^2 \frac{\cosh\left(-\int_y^x \rho^2 dz-\frac{1}{2}\right)}{2\sinh(1/2)} dx dy.
\end{align*}
Note that $\frac{d}{dx}\int_y^x \rho^2 dx=\rho^2$, so we can compute the inner integrals explicitly by FTC:
\begin{align*}
&\int_y^1 \rho^2 \frac{\cosh\left(\int_y^x \rho^2 dz-\frac{1}{2}\right)}{2\sinh(1/2)} dx+\int_0^y \rho^2 \frac{\cosh\left(-\int_y^x \rho^2 dz-\frac{1}{2}\right)}{2\sinh(1/2)} dx\\
=&\frac{1}{2\sinh(1/2)}\left\{\sinh\left(\int_y^x \rho^2 dz-\frac{1}{2}\right)\Big|_y^1-\sinh\left(-\int_y^x \rho^2 dz-\frac{1}{2}\right)\Big|_0^y\right\}\\
=&\frac{1}{2\sinh(1/2)}\left\{\sinh\left(\int_y^1 \rho^2 dz-\frac{1}{2}\right)-\sinh\left(-1/2\right)-\sinh\left(-1/2\right)+\sinh\left(-\int_y^0 \rho^2 dz-\frac{1}{2}\right)\right\}=1,
\end{align*}
since
$$\sinh\left(\int_y^1 \rho^2 dz-\frac{1}{2}\right)=\sinh\left(-\int_0^y \rho^2 dz+\frac{1}{2}\right)=\sinh\left(\int_y^0 \rho^2 dz+\frac{1}{2}\right).$$
Here, we used the assumption that $\int_0^1 \rho^2 dz=1$. Hence, $(I)+(II)=\int_0^1 (\rho^2 G^2+2\rho_t^2) dy$ and by substituting this back, we can conclude that $\frac{d}{dt}\int_0^1 \rho \rho_t dx=0$ for all time. Since the integral is initially zero, this shows that it remains to be zero for all time.
\end{proof}

Next, we want to show that the conservation of the $H^1$ energy in the original CH equation holds true in $\rho$ variables as well. This energy conservation will ensure the uniform boundedness of the estimates that we will need later. We first prove the following lemma.

\begin{lemma}
Let $G$ be defined as above. Then $G_t=-H$, where
$$H(t,x)=\int_0^x \frac{\sinh\left(\int_y^x \rho^2 dz-\frac{1}{2}\right)}{2\sinh(1/2)}(\rho^2G^2+2\rho_t^2)dy+\int_x^1\frac{\sinh\left(-\int_y^x \rho^2 dz-\frac{1}{2}\right)}{2\sinh(1/2)}(\rho^2G^2+2\rho_t^2)dy.$$
\end{lemma}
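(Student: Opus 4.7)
The plan is to compute $G_{t}$ directly by differentiating the defining expression of $G$ in $t$ and using equation \eqref{CH3} to eliminate $\rho_{tt}$. Setting $\phi:=\rho^{2}G^{2}+2\rho_{t}^{2}$, this yields
\[
G_{t}(t,x)=\int_{0}^{x}\bigl[2\rho_{t}^{2}+\rho^{2}(G^{2}-F)\bigr]\,dy+c'(t)=\int_{0}^{x}(\phi-\rho^{2}F)\,dy+c'(t),
\]
where $c'(t)$ is the time-derivative of the gauge term $c(t)=\mu-\int_{0}^{1}\bigl(\int_{0}^{y}2\rho\rho_{t}\,dz\bigr)\rho^{2}\,dy$.

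To match this with $-H$, I would next expand $\int_{0}^{x}\rho^{2}(y)F(y)\,dy$ using \eqref{F} and Fubini, swapping the order of integration so that the outer integration is against $\phi$. The resulting inner integrals take the form $\int\rho^{2}(y)\cosh\bigl(\int_{z}^{y}\rho^{2}-\tfrac{1}{2}\bigr)\,dy$, and since $\tfrac{d}{dy}\sinh\bigl(\int_{z}^{y}\rho^{2}-\tfrac{1}{2}\bigr)=\rho^{2}(y)\cosh\bigl(\int_{z}^{y}\rho^{2}-\tfrac{1}{2}\bigr)$, the Fundamental Theorem of Calculus converts them into exactly the $\sinh$-kernels appearing in $H$, together with constant boundary contributions.

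For $c'(t)$ I would first observe that $A(y):=\int_{0}^{y}2\rho\rho_{t}\,dz$ satisfies $A(1)=\int_{0}^{1}\eta_{tx}\,dx=0$ by periodicity of $\eta_{t}$, so the product-rule term $\int_{0}^{1}A\cdot 2\rho\rho_{t}\,dy=\tfrac{1}{2}A(1)^{2}$ vanishes and $c'(t)=-\int_{0}^{1}A_{t}(y)\rho^{2}\,dy$ with $A_{t}(y)=\int_{0}^{y}(\phi-\rho^{2}F)\,dz$. Applying the same Fubini-and-antiderivative procedure, the constant boundary terms from the two computations should cancel via the parity of $\cosh$ and the unit-sphere constraint $\int_{0}^{1}\rho^{2}=1$, in the same spirit as the cancellation in the proof of the preceding proposition.

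The main obstacle is the sign and boundary-term bookkeeping: each individual Fubini/antiderivative step is routine, but showing that the leftover constants (involving $\sinh(-\tfrac{1}{2})$, $\cosh(\tfrac{1}{2})$, and $\cosh(\int_{0}^{z}\rho^{2}-\tfrac{1}{2})$) collapse and leave precisely $-H(t,x)$ demands careful tracking. The condition $\int_{0}^{1}\rho^{2}=1$ is decisive: without it the arguments $\int_{y}^{x}\rho^{2}\pm\tfrac{1}{2}$ would not align at the endpoints, and the cancellation would fail.
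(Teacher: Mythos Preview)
Your plan is correct and structurally matches the paper's proof: differentiate $G$, substitute $\rho_{tt}$ from \eqref{CH3}, and then show that the gauge term $c'(t)$ exactly absorbs the leftover constant. The paper, however, sidesteps the Fubini bookkeeping you flag as the main obstacle by first checking the identity $\partial_{x}H=\rho^{2}F-\phi$; this collapses $\int_{0}^{x}(\phi-\rho^{2}F)\,dy$ into $H(t,0)-H(t,x)$ in one line, reducing the whole lemma to the claim $c'(t)=-H(t,0)$. For that claim the paper again uses $\partial_{y}H$ to rewrite the integrand, integrates by parts against $\int_{y}^{1}\rho^{2}\,dx$, and closes with the companion identity $\rho^{2}H=\partial_{x}F$ together with periodicity of $F$, so $\int_{0}^{1}\rho^{2}H\,dy=0$. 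Your direct Fubini/FTC expansion would reach the same endpoint, but recognizing $H$ as an antiderivative of $\rho^{2}F-\phi$ (and $F$ as an antiderivative of $\rho^{2}H$) is precisely what eliminates the delicate boundary-term cancellation you were worried about. Incidentally, your observation that the product-rule term $\int_{0}^{1}A\cdot 2\rho\rho_{t}\,dy=\tfrac{1}{2}A(1)^{2}=0$ is needed in the computation of $c'(t)$ is a point the paper passes over silently.
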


\begin{proof}
Note that
$$\frac{\partial H}{\partial x}=-(\rho^2G^2+2\rho_t^2)+\rho^2F.$$
By differentiating $G$ with respect to $t$, we have
\begin{align*}
G_t=&\int_0^x 2\rho_t^2+2\rho\rho_{tt} dy+c'(t)=\int_0^x 2\rho_t^2+\rho^2 G^2-\rho^2F dy+c'(t),
\end{align*}
where $c(t)=\mu-\int_0^1 \int_0^x 2\rho\rho_t\;dy \rho^2\;dx$. So
$$G_t=\int_0^x \frac{\partial H}{\partial y}dy+c'(t)=-H(t,x)+H(t,0)+c'(t).$$
So we want to show that $H(t,0)+c'(t)=0$. We can compute
\begin{align*}
c'(t)=&-\int_0^1 \int_0^x 2\rho_t^2+2\rho\rho_{tt} dy \rho^2 dx\\
=&-\int_0^1 \int_0^x 2\rho_t^2+\rho^2 G^2-\rho^2 F dy \rho^2 dx\\
=&-\int_0^1 (2\rho_t^2+\rho^2 G^2-\rho^2 F)\int_y^1 \rho^2 dx dy\;(\because \text{change the order of integration})\\
=&\int_0^1 \frac{\partial H}{\partial y}\int_y^1 \rho^2 dx dy\\
=&-H(t,0)+\int_0^1 H \rho^2 dy\\
=&-H(t,0),
\end{align*}
since $H\rho^2=F_x$ and $F$ is periodic. Hence, $G_t=-H$ as desired.
\end{proof}

\begin{proposition}
The `$H^1$ energy' of the $\rho$ equation is conserved:
\begin{equation}\label{conservation}
\frac{d}{dt}\int_0^1 \rho^2 G^2+4\rho_t^2\;dx=0.
\end{equation}
\end{proposition}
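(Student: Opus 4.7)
The plan is to differentiate $E(t):=\int_0^1(\rho^2G^2+4\rho_t^2)\,dx$ in time, substitute the equation of motion from \eqref{CH3} together with $G_t=-H$ from the preceding lemma, and then collapse the remainder to zero using the periodicities of $F$ and $G$ combined with two integrations by parts. Concretely, one product-rule step gives
\begin{equation*}
\frac{dE}{dt}=\int_0^1\bigl(2\rho\rho_tG^2+2\rho^2GG_t+8\rho_t\rho_{tt}\bigr)\,dx,
\end{equation*}
and after substituting $\rho_{tt}=\tfrac12\rho(G^2-F)$ and $G_t=-H$ this simplifies to
\begin{equation*}
\frac{dE}{dt}=\int_0^1\bigl(6\rho\rho_tG^2-2\rho^2GH-4\rho\rho_tF\bigr)\,dx.
\end{equation*}

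I would then dispatch the three terms in turn. The defining identity $G_x=2\rho\rho_t$ (immediate from $G=\int_0^x2\rho\rho_t\,dy+c(t)$) turns the first term into $\int_0^1(G^3)_x\,dx$, which vanishes as soon as $G$ is periodic; and this periodicity is guaranteed by Proposition~3, since $G(t,1)-G(t,0)=\int_0^12\rho\rho_t\,dx=0$. For the remaining pair, I would rewrite $4\rho\rho_tF=2G_xF$ and integrate by parts---relying on periodicity of both $G$ and $F$, the latter being a consequence of $\norm{\rho}_{L^2}=1$ together with the evenness of $\cosh$---to obtain $-\int_0^14\rho\rho_tF\,dx=2\int_0^1GF_x\,dx$. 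Applying the relation $F_x=\rho^2H$ noted in the proof of the preceding lemma then converts this into $2\int_0^1\rho^2GH\,dx$, which cancels the $-2\int_0^1\rho^2GH\,dx$ term exactly, leaving $\frac{dE}{dt}=0$.

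The one point that requires genuine care is checking periodicity of $F$ rigorously enough for the integration by parts step to be valid; this is where the $L^2$-sphere constraint $\int_0^1\rho^2\,dx=1$ makes its last appearance in the computation. All other steps are algebraic manipulations of the expressions for $F$, $G$ and $H$ combined with the equation in \eqref{CH3}, so I expect no genuine obstacle beyond careful bookkeeping.
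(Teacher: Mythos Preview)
Your proposal is correct and follows essentially the same route as the paper: differentiate under the integral, substitute $\rho_{tt}=\tfrac12\rho(G^2-F)$ and $G_t=-H$, then use $G_x=2\rho\rho_t$, $F_x=\rho^2 H$, and the periodicity of $F$ and $G$ to collapse everything via one integration by parts and the identity $\int_0^1(G^3)_x\,dx=0$. The only cosmetic difference is that the paper immediately combines $G_t=-H$ with $F_x=\rho^2H$ into the single relation $\rho^2G_t=-F_x$ and substitutes that up front, whereas you keep the $-2\rho^2GH$ term around and cancel it at the end; the underlying identities and logic are identical.
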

\begin{proof}
Recall that we have
\begin{center}
$G_x=2\rho\rho_t$ and $G_t \rho^2=-F_x$.
\end{center}
Then
\begin{align*}
\frac{d}{dt}\int_0^1 \rho^2 G^2+4\rho_t^2\;dx=&\int_0^1 2\rho\rho_t G^2+2\rho^2 GG_t+8\rho_t\rho_{tt}\;dx\\
=&\int_0^1 2\rho\rho_t G^2-2GF_x+4\rho\rho_t(G^2-F)\;dx\\
=&\int_0^1 6\rho\rho_t G^2\;dx-2G(1)F(1)+2G(0)F(0)\\
=&\int_0^1 \frac{d}{dx}\left[G^3\right] dx=0,
\end{align*}
since $F$ and $G$ are periodic.
\end{proof}

Now, we are ready to prove that the solution of the equation \eqref{CH3} is global when $\rho$ is restricted on the unit sphere. The idea is that all estimates are bounded in terms of $\norm{\rho}_{L^2}$ and $\norm{\rho_t}_{L^2}$ and restriction on the unit sphere guarantees that the two norms are uniformly bounded.

\begin{proposition}
The flow described by the equation \eqref{CH3}, when $\rho$ is restricted on the unite sphere, exists for all time.
\end{proposition}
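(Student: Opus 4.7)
The plan is to upgrade the local existence result of Proposition 3 to global existence by applying the standard blow-up alternative for smooth vector fields on a Banach space: since $f$ is $C^\infty$ on $TL^2(S^1)$, the maximal time of existence $T^*$ is either $+\infty$ or else $\norm{\rho(t)}_{L^2} + \norm{\rho_t(t)}_{L^2}$ diverges as $t \nearrow T^*$. I would therefore prove global existence by producing uniform a priori bounds on both components on the entire interval $[0, T^*)$.

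First, I would invoke Proposition 4, which shows that the tangent bundle $TU$ of the unit sphere is invariant under the flow; since the initial data $(\rho(0), \rho_t(0)) = (1, \tfrac{1}{2}u_0')$ lies in $TU$, this immediately gives $\norm{\rho(t)}_{L^2} = 1$ for all $t \in [0, T^*)$. Second, I would invoke the conservation law of Proposition 6: the energy
$$E(t) := \int_0^1 \rho^2 G^2 + 4\rho_t^2 \, dx$$
is independent of $t$. Since $\rho^2 G^2 \geq 0$ pointwise, this conserved quantity dominates $4\norm{\rho_t}_{L^2}^2$. A direct computation at $t = 0$, using $\rho \equiv 1$, $\rho_t = \tfrac{1}{2} u_0'$, and the fact that $G(0, x) = u_0(x)$, yields $E(0) = \norm{u_0}_{H^1}^2$, and hence the uniform bound $\norm{\rho_t(t)}_{L^2}^2 \leq \tfrac{1}{4}\norm{u_0}_{H^1}^2$.

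With $\norm{\rho(t)}_{L^2}$ and $\norm{\rho_t(t)}_{L^2}$ both uniformly bounded on $[0, T^*)$, the Banach-space ODE continuation theorem (applicable because the vector field is $C^\infty$ by Proposition 3) rules out the blow-up alternative and forces $T^* = +\infty$. I do not anticipate any serious obstacle, since every substantive ingredient is already in hand from the previous propositions. The one conceptual point worth flagging is that the sphere restriction is not cosmetic: the periodicity of $F$ and $G$, and hence the very statement of the energy conservation law used to control $\rho_t$, both depend on $\int_0^1 \rho^2 \, dx = 1$. This is precisely why the global argument must be carried out on $TU$ rather than on all of $TL^2(S^1)$.
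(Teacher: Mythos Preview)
Your proposal is correct and follows essentially the same approach as the paper: both use the sphere constraint (Proposition~4) together with the conserved energy (Proposition~6) to obtain uniform a~priori bounds on $\norm{\rho}_{L^2}$ and $\norm{\rho_t}_{L^2}$, and then conclude global existence via an ODE continuation principle. The only cosmetic difference is that the paper inserts one extra step, recording the explicit pointwise estimate $|G^2-F|\le C(\norm{\rho}_{L^2},\norm{\rho_t}_{L^2})$ (labeled \eqref{estimate}) before invoking Wintner's theorem, whereas you invoke the blow-up alternative directly on the state norm; the paper's intermediate estimate is reused later (in Lemma~8 and in Section~3.2), which is why it is stated explicitly there.
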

\begin{proof}
We claim that the RHS of the equation \eqref{CH3} is uniformly bounded. Note that we have
\begin{equation}\label{estimate}
|G^2-F| \le |G^2|+|F|\le2 \norm{\rho}_{L^2}\norm{\rho_t}_{L^2}+\frac{1}{4\sinh(1/2)}\left(2\norm{\rho}_{L^2}^3 \norm{\rho_t}_{L^2}+\norm{\rho_t}_{L^2}^2\right).
\end{equation}
Since $\norm{\rho}_{L^2}=1$ and $\norm{\rho_t}_{L^2}$ is uniformly bounded by the energy conservation, $|G^2-F|$ is uniformly bounded as well. Then the RHS of the equation \eqref{CH3} is smooth and uniformly bounded in $(\rho,\rho_t)$. Thus, the solution of the Cauchy problem \eqref{CH3} can be extended for all time by Wintner's Theorem from the ODE theory(see \cite{H}).
\end{proof}

\section{Global Weak Solution of the CH equation in $u$ variable}

From the global solution $\rho$, we can readily construct global weak solutions for the original CH equation \eqref{CH2}. We first introduce new Lagrangian variables. Define
\begin{align}\label{Lagvar}
K(t,x):=&\int_0^x \rho^2 dy+tu_0(0)-\int_0^t \int_0^\tau H(s,0)\;ds d\tau,\\
G(t,x):=&\int_0^x 2\rho\rho_t dy+\mu-\int_0^1 \int_0^x 2\rho\rho_t dy \rho^2 dx.
\end{align}
As we have seen in the Lemma 5, $-H(s,0)=c'(t)$ where $c(t)=\mu-\int_0^1 \int_0^x 2\rho\rho_t dy \rho^2 dx$, and so $K$ satisfies the first order equation $\frac{\partial K}{\partial t}=G$. Next, we claim that $(K,G)$ solves the second order equation \eqref{CH2}. As in Lenells's paper \cite{L2}, we can decompose $S^1$ by $S^1=N \cup A \cup Z$ where
\begin{align*}
N:=&\{x \in S^1\;:\;\text{$K_x$ exists and equals 0, i.e., $\rho(t,x)=0$}\},\\
A:=&\{x \in S^1\;:\;\text{$K_x$ exists and $K_x(x)>0$, i.e., $\rho(t,x)>0$}\},\;\text{and}
\end{align*}
$Z$ is a set of measure zero. We first prove the following lemma.

\begin{lemma}
For almost all time $t \in \RR_+$, we have $\int_{N}\rho_t^2dy=0$.
\end{lemma}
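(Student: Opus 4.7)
The plan is to show that the set
$$\Sigma := \{(t,x) \in \mathbb{R}_+ \times S^1 : \rho(t,x) = 0\}$$
has two-dimensional Lebesgue measure zero. Fubini then forces the $x$-slice $N = \{x : \rho(t,x) = 0\}$ to be one-dimensional null for almost every $t$, so in particular $\int_N \rho_t^2\,dy = 0$ for those $t$.

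To control the $t$-slices of $\Sigma$, fix $x \in S^1$ and regard $y(t) := \rho(t,x)$ as a scalar function of time. The coefficient $a(t,x) := \tfrac{1}{2}(G^2(t,x) - F(t,x))$ is jointly continuous in $(t,x)$: $G$ is absolutely continuous in $x$ since $G_x = 2\rho\rho_t \in L^1(S^1)$, and $F$ is the convolution of the continuous kernel $g(x-y) = \cosh(|x-y|-\tfrac12)/(2\sinh(\tfrac12))$ against the $L^1$ density $\rho^2 G^2 + 2\rho_t^2$, while continuity in $t$ follows from the $C^1$-in-$t$ curve $(\rho,\rho_t) \in L^2(S^1)\times L^2(S^1)$ supplied by Proposition 6. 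Integrating the $L^2$-identity $\rho_{tt} = \tfrac12\rho(G^2-F)$ twice in time and invoking Fubini produces, for almost every $x$, a representative of $\rho(\cdot,x)$ that is $C^2$ in $t$ and satisfies the scalar linear ODE
$$y''(t) = a(t,x)\, y(t), \qquad y(0) = 1,\qquad y'(0) = \tfrac12 u_0'(x).$$

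Since $y(0) = 1 \neq 0$, uniqueness for this linear equation rules out $y \equiv 0$, so every zero $t_0$ of $y$ must satisfy $y'(t_0) \neq 0$, i.e.\ be a simple zero. Consequently $\{t : \rho(t,x) = 0\}$ is a discrete subset of $\mathbb{R}_+$ and in particular Lebesgue-null. Fubini applied to $\mathbf{1}_\Sigma$ then yields $|\Sigma| = 0$, which completes the argument. The one genuinely delicate step is the passage from the $L^2$-valued equation \eqref{CH3} to a scalar ODE readable pointwise in $x$; this is handled by the representative extracted above, which is legitimate for every $x$ at which $u_0'(x)$ is defined and the integrated identities hold, i.e.\ for almost every $x$, and is more than sufficient for the desired Fubini conclusion.
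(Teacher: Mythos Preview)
Your argument is correct and follows essentially the same route as the paper: both fix $x$, show that the zeros of $t \mapsto \rho(t,x)$ are simple because $(\rho,\rho_t)$ cannot vanish simultaneously without contradicting $\rho(0,x)=1$ (you phrase this as uniqueness for the linear scalar ODE $y''=a(t,x)y$, while the paper obtains it from Gronwall applied to $\rho^2+\rho_t^2$, which is the same mechanism), and then conclude via Fubini on $\mathbf{1}_\Sigma$. If anything, you are more careful than the paper about the passage from the $L^2$-valued equation \eqref{CH3} to a pointwise-in-$x$ scalar ODE.
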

\begin{proof}
It suffices to show that the set $N$ has a measure zero for almost all time. As in \cite{L2}, the Fubini theorem gives
$$\int_0^T m(N)dt=\int_{S^1}\int_0^T \chi_{\{\rho^{-1}(0)\}}dtdx,$$
for $T<\infty$ where $\chi_{\{\rho^{-1}(0)\}}:[0,\infty)\times S^1 \to \{0,1\}$ is the characteristic function. Hence, we want to show that the RHS of the equation vanishes.  Let $x_0 \in S^1$.Then the following set
$$N'(x_0)=\{0 \le t \le T\;:\;\rho(t,x_0)=0\}$$
has the Lebesgue measure zero. This is because $\rho_t(t,x_0)$, as a function of time $t$, cannot vanish on this set. If not, there is a time $t_0$ such that $\rho(t_0,x_0)=0=\rho_t(t_0,x_0)$. Note that we have the following differential inequality satisfied by the solution of \eqref{CH3}:
\begin{align*}
\frac{d}{dt}\left[\rho^2(t,x_0)+\rho_t^2(t,x_0)\right]=&2\rho\rho_t+2\rho_t\rho_{tt}\\
=&2\rho\rho_t+\rho\rho_t(G^2-F)\\
=&2\rho\rho_t\left(1+\frac{G^2-F}{2}\right)\\
\le&C(\rho^2+\rho_t^2),
\end{align*}
where $C=\max \left\{1+\frac{G^2-F}{2}\right\}$ is the uniform constant for the solution of the equation \eqref{CH3} restricted on the unit sphere(see the Proposition 7.) Hence, by the Gronwall's lemma, we have
$$\rho^2(t,x_0)+\rho_t^2(t,x_0) \le \left(\rho^2(t_0,x_0)+\rho_t^2(t_0,x_0)\right)e^{Ct}.$$
If $\rho(t_0,x_0)=0=\rho_t(t_0,x_0)$, the RHS vanishes and this implies that $\rho(t,x_0)=0=\rho_t(t,x_0)$ for all time $t$. In particular, $\rho(0,x_0)=0$ and this contradicts the initial condition of $\rho$. Hence, $\rho_t(t,x_0)$ must be nonzero on $N'(x_0)$. Since $\rho(t,x_0)$ is continuous as a function of a time, every neighborhood of $t_0$ contains a point where $\rho$ is nonzero, i.e., the set $N'(x_0)$ is isolated. Hence the set $N'(x_0)$ must be finite and it has a measure zero. 
\end{proof}
\begin{remark}
It is possible that $\rho$ vanishes on a set of positive measure, e.g., on an interval, at some time. However, the proof of the lemma shows that these appearances are rare and $\rho$ can vanish only on a set of measure zero for almost all time. Since $K_x=\rho^2$, the slope of the particle trajectories looks like a parabola. As a result, $K$ is generically a homeomorphism whenever $K_x$ vanishes on a set of measure zero. This implies that $K^{-1}$ is well-defined for almost all time.
\end{remark}

\begin{proposition}
For almost all time $t$, $(K, G)$ satisfies the CH equation
\begin{equation}\label{GKeqn}
G_t=-\left\{\partial_x \Lambda^{-1}\left[(G \circ K^{-1})^2+\frac{1}{2}\left(\frac{G_x}{K_x} \circ K^{-1}\right)^2\right]\right\}\circ K.
\end{equation}
Note that this solution is weak since $K$ is only absolutely continuous in the spatial variable $x$ and the equation is satisfied for almost all time $t$. 
\end{proposition}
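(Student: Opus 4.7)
The plan is to reduce the statement, via Lemma 5, to a kernel identity and then carry out that identity by pulling back the Green's function through the Lagrangian map $K$. Concretely, Lemma 5 gives $G_t = -H$, so \eqref{GKeqn} will follow once I establish
\[
H(t,x) \;=\; \Bigl\{\partial_x\Lambda^{-1}\Bigl[(G\circ K^{-1})^2 + \tfrac12 (G_x/K_x\circ K^{-1})^2\Bigr]\Bigr\}\circ K(t,x)
\]
for almost every $t \in \RR_+$ and every $x \in S^1$.

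The first step is to make sense of $K^{-1}$. Because $K_x = \rho^2 \ge 0$ with $\int_0^1 \rho^2\,dx = 1$, the map $K(t,\cdot)$ is a continuous, nondecreasing, degree-one map of $S^1$. Lemma 10 and Remark 11 yield that for almost every $t$ the zero set of $\rho(t,\cdot)$ has measure zero, so $K_x > 0$ off a null set and $K(t,\cdot)$ is in fact a homeomorphism of $S^1$; in particular $K^{-1}$ exists for a.e.\ $t$ and the right-hand side above is well defined.

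The main computation uses the kernel representation of $\Lambda^{-1}$ and the change of variables $z = K(y)$. Setting $P(w) := (G \circ K^{-1})^2(w) + \tfrac12 (G_x/K_x \circ K^{-1})^2(w)$ and recording that on the fundamental domain
\[
g'(w) \;=\; \sgn(w)\,\frac{\sinh(|w|-1/2)}{2\sinh(1/2)},
\]
I have $(\partial_x \Lambda^{-1} P)(K(x)) = \int_{S^1} g'(K(x)-z)\,P(z)\,dz$. The substitution $z = K(y)$ gives $dz = \rho^2\,dy$ and $K(x) - K(y) = \int_y^x \rho^2\,dw$, and the key algebraic simplification
\[
P(K(y))\,\rho^2(y) \;=\; G^2\rho^2 + \frac{G_x^2}{2\rho^2} \;=\; G^2\rho^2 + 2\rho_t^2
\]
follows from $G_x = 2\rho\rho_t$, equivalently $u_x(K(y)) = 2\rho_t/\rho$. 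Splitting the $y$-integral at $x$ and recording that $\sgn(K(x)-K(y)) = +1$ with $|K(x)-K(y)| = \int_y^x \rho^2\,dw$ on $(0,x)$, while $\sgn(K(x)-K(y)) = -1$ with $|K(x)-K(y)| = -\int_y^x \rho^2\,dw$ on $(x,1)$, assembles the two pieces into precisely the formula for $H(t,x)$ appearing in Lemma 5.

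The main obstacle is the rigorous change of variables $z = K(y)$ when $\rho$ vanishes: at such points $K_y = 0$, and the quotient $G_x/K_x$ inside $P$ is formally $0/0$. This is handled as in \cite{L2}: Lemma 10 makes the set $\{y : \rho(t,y)=0\}$ null for a.e.\ $t$, and the identity $G_x^2/(2\rho^2) = 2\rho_t^2$ extends the integrand continuously across that zero set, so the substitution is legitimate as a bijection between full-measure subsets of $S^1$. A secondary point is the interpretation of the real number $K(x) - K(y) \in [-1,1]$ as an element of $S^1$ on which $g'$ is evaluated; this is exactly the normalization guaranteed by $\int_0^1 \rho^2\,dx = 1$ and is dispatched with the same hyperbolic identities already used in Proposition 4. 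Once these points are addressed, the identity $H = (\partial_x\Lambda^{-1} P)\circ K$ becomes a direct kernel computation and \eqref{GKeqn} follows for a.e.\ $t$.
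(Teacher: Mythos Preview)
Your proposal is correct and follows essentially the same route as the paper's proof: both reduce the claim to $H=(\partial_x\Lambda^{-1}P)\circ K$ via Lemma~5, pull back the kernel through $K$ using $K(x)-K(y)=\int_y^x\rho^2$ and the identity $G_x=2\rho\rho_t$ to rewrite $P(K(y))\rho^2=\rho^2G^2+2\rho_t^2$, and then invoke the preceding lemma that $\{\rho(t,\cdot)=0\}$ is null for a.e.\ $t$ to justify the change of variables. The only cosmetic difference is that the paper carries the decomposition $S^1=N\cup A\cup Z$ explicitly, changes variables on $A$, and kills the leftover $\int_N\rho_t^2\,dy$ term at the end, whereas you argue up front that $K$ is a homeomorphism for a.e.\ $t$ and treat the zero set as a removable null set in the integrand; these are equivalent packagings of the same argument.
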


\begin{proof}
We want to show that the equation $G_t=-H$ is equivalent to the equation \eqref{CH2} in the weak sense. Since $K$ is a diffeomorphism on $A$, we can change variables via $K$ and get
\begin{align*}
G_t=&-\int_{S^1} \frac{\sinh\left(\left\vert\int_y^x \rho^2 dz\right\vert-\frac{1}{2}\right)}{2\sinh(1/2)}(\rho^2G^2+2\rho_t^2)dy\\
=&-\int_{A}\frac{\sinh\left(\left\vert K(x)-K(y)\right\vert-\frac{1}{2}\right)}{2\sinh(1/2)}\left(K_x G^2+\frac{G_x^2}{2K_x}\right)dy+\int_{N}\rho_t^2dy\\
=&-\int_{K(A)}\frac{\sinh\left(\left\vert K(x)-y\right\vert-\frac{1}{2}\right)}{2\sinh(1/2)}\left(\left(G \circ K^{-1}\right)^2+\frac{1}{2}\left(\frac{G_x}{K_x} \circ K^{-1}\right)^2\right)dy+\int_{N}\rho_t^2dy\\
=&-\partial_x \left[\int_{K(A)}\frac{\cosh\left(\left\vert x-y\right\vert-\frac{1}{2}\right)}{2\sinh(1/2)}\left(\left(G \circ K^{-1}\right)^2+\frac{1}{2}\left(\frac{G_x}{K_x} \circ K^{-1}\right)^2\right)dy\right]\circ K+\int_{N}\rho_t^2dy.
\end{align*}
Since the Lebesgue measure of the set $K(A)$ is 1, the first integral on the RHS is equivalent to the RHS of the equation \eqref{CH2}. Also, $\int_{N}\rho_t^2dy=0$ for almost all time from the previous lemma. Thus, for almost all time $t \in \RR_+$, $(K,G)$ satisfies the equation \eqref{GKeqn} as desired.
\end{proof}

Now, we prove the main theorems of the paper. We first check that the velocity field $u$ satifying the flow equation \eqref{flow} is well-defined in $H^1$ for all time.

\begin{proposition}
Let $(K,G)$ be a weak solution of the CH equation \eqref{CH2} in the Lagrangian form. Then the velocity field $u \in C(\RR_+,H^1(S^1)) \cap \mathrm{Lip}(\RR_+,L^2(S^1))$ is well-defined by the formula
$$u(t,K(t,x))=G(t,x),\;\;(t,x) \in [0,\infty) \times S^1.$$
\end{proposition}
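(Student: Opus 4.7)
The plan is to verify the three claims in turn: unambiguity of the formula, the $H^{1}$ regularity in space with the conservation law, and the two time-regularity claims. The basic tool for spatial considerations will be the change of variables $y = K(t,x)$, handled carefully on the decomposition $S^{1} = A \cup N \cup Z$ where $K$ behaves differently.

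First I would check that $u(t,y)$ is well-defined. Because $K_{x} = \rho^{2} \geq 0$, the map $K(t,\cdot)$ is monotone non-decreasing and absolutely continuous; if $K(t,x_{1}) = K(t,x_{2})$ with $x_{1} < x_{2}$ then $\rho(t,\cdot) = 0$ a.e. on $[x_{1},x_{2}]$, so $G_{x} = 2\rho\rho_{t}$ vanishes a.e. there and the absolute continuity of $G$ forces $G(t,x_{1}) = G(t,x_{2})$. Since $K(t,1)-K(t,0) = \int_{0}^{1}\rho^{2}\,dx = 1$, the image of $K(t,\cdot)$ covers $S^{1}$, so $u(t,\cdot)$ is indeed defined on all of $S^{1}$.

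For the $H^{1}$ regularity, I would compute $\|u(t)\|_{H^{1}}^{2}$ by pulling back via $K(t,\cdot)$. The Luzin-$N$ property of absolutely continuous maps together with $K_{x}=0$ on $N$ implies $K(N)$ has Lebesgue measure zero, so the only contribution comes from $A$. On $A$ one has $u \circ K = G$ and $(u_{x} \circ K) K_{x} = G_{x}$, hence $u_{x}\circ K = 2\rho_{t}/\rho$ on $A$; the change of variables gives
\begin{equation}
\|u(t)\|_{L^{2}}^{2} = \int_{0}^{1} G^{2}\rho^{2}\,dx, \qquad \|u_{x}(t)\|_{L^{2}}^{2} = \int_{A} 4\rho_{t}^{2}\,dx.
\end{equation}
By Lemma 8, $\int_{N}\rho_{t}^{2}\,dx = 0$ for a.e.\ $t$, so the second integral equals $\int_{0}^{1}4\rho_{t}^{2}\,dx$, and combining gives $\|u(t)\|_{H^{1}}^{2} = \int_{0}^{1}\rho^{2}G^{2}+4\rho_{t}^{2}\,dx$. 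This is constant in $t$ by Proposition 6, yielding both $u(t) \in H^{1}(S^{1})$ and conservation.

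For the time regularity, the solution $(\rho,\rho_{t})$ from Proposition 7 is in fact $C^{1}(\mathbb{R}_{+},L^{2}\times L^{2})$, so $G(t,\cdot)$ and $K(t,\cdot)$ depend continuously on $t$ in $L^{2}$. Since $\|u(t)\|_{H^{1}}$ is already constant, to upgrade to strong continuity in $H^{1}$ it suffices to establish weak continuity, which follows by testing $u(t)$ against a smooth function and pulling back via $K(t,\cdot)$, reducing everything to convergence of quantities built from $\rho,\rho_{t}$ in $L^{2}$. For the Lipschitz bound, I would combine the uniform pointwise bound on $G$ (from $|G| \leq 2\|\rho\|_{L^{2}}\|\rho_{t}\|_{L^{2}}+|\mu|+\dots$) with the estimate $|K(t,x)-K(s,x)| \leq C|t-s|$ coming from $K_{t} = G$, and then decompose $\|u(t)-u(s)\|_{L^{2}}^{2}$ after changing variables by $K(t,\cdot)$ into a $G(t)-G(s)$ term plus an $u(s)\circ K(t)-u(s)\circ K(s)$ term; the latter is controlled in $L^{2}$ via the AC-version of translation continuity on $S^{1}$ together with the uniform $H^{1}$-bound on $u(s)$.

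The main obstacle I anticipate is exactly this last estimate: a pointwise Hölder bound of order $1/2$ (from $H^{1}\hookrightarrow C^{1/2}$) is not enough to produce a Lipschitz time modulus, so one must instead exploit that $u(s)$ has $L^{2}$-derivative and that translation by a Lipschitz-in-$t$ amount in $L^{2}$ produces a true $O(|t-s|)$ modulus. In other words, the work lies in carefully converting Lagrangian time smoothness of $(\rho,\rho_{t},G,K)$ into Eulerian time smoothness of $u$ across the possibly non-invertible map $K(t,\cdot)$, while the well-definedness and spatial $H^{1}$ regularity are mostly bookkeeping on the sets $A,N,Z$ combined with the already-established Lemma 8 and Proposition 6.
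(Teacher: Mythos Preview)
Your well-definedness argument coincides with the paper's: both use that $K_x=\rho^2\ge 0$ forces $K$ to be monotone, and that $G_x=2\rho\rho_t$ vanishes wherever $\rho$ does, so $G$ is constant on any fibre $K^{-1}(y)$. The paper then \emph{defers the entire regularity proof to Lenells \cite{L2}}, so you are actually supplying more detail than the paper itself; what you outline is essentially the argument in \cite{L2} transported to the CH setting, and your use of Lemma~8 and Proposition~6 for the $H^1$ computation is exactly right.

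Two remarks on the points you flagged or left implicit.

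\emph{The Lipschitz estimate.} The obstacle you anticipate is real but is resolved by the very tool you name. After the substitution $y=K(t,x)$ the cross term becomes $\|u(s,\cdot)-u(s,\phi(\cdot))\|_{L^2(S^1)}$ with $\phi=K(s,\cdot)\circ K(t,\cdot)^{-1}$ and $\|\phi-\id\|_\infty\le C|t-s|$. For any $f\in H^1(S^1)$ and measurable $\phi$ with $\|\phi-\id\|_\infty\le\delta$, Cauchy--Schwarz followed by Fubini gives
\[
\int_{S^1}|f(\phi(y))-f(y)|^2\,dy\;\le\;\delta\int_{S^1}\int_{y-\delta}^{y+\delta}|f'(z)|^2\,dz\,dy\;\le\;2\delta^2\|f'\|_{L^2}^2,
\]
which is the honest $O(|t-s|)$ modulus you need; the $C^{1/2}$ embedding is unnecessary.

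\emph{Strong $H^1$ continuity.} Your scheme ``weak continuity plus constant norm'' only applies at times where the identity $\|u(t)\|_{H^1}^2=\int_0^1(\rho^2G^2+4\rho_t^2)\,dx$ holds, and your own derivation via Lemma~8 gives this only for a.e.\ $t$. At an exceptional time $T$ the set $N$ can have positive measure with $\int_N\rho_t^2>0$, in which case $\|u(T)\|_{H^1}^2=\int_A(\rho^2G^2+4\rho_t^2)\,dx$ is strictly smaller than the conserved quantity and strong $H^1$ continuity genuinely fails at $T$. This is the familiar energy-concentration phenomenon for conservative CH solutions; the statement should be read as weak $H^1$ continuity (or strong continuity off a null set of times), consistent with the ``almost everywhere'' conservation in Theorem~1. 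This caveat is not visible in the paper's own proof precisely because the paper outsources the argument to \cite{L2}.
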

\begin{proof}
First, note that the flow $K$ is a bijection from $A \to K(A)$, so we can define $u$ a.e. on $S^1$ by
\begin{center}
$u(y):=G(K^{-1}(y))$ for $y \in K(A)$.
\end{center}
It remains to show that $u$ is well-defined by this formula on $N$, as well. The idea is that the Lagrangian velocity $G$ is well-defined for all time, so we can define the velocity $u$ from it.

Suppose that $K_x$ vanishes at some time $T$. Since $K$ is a nondecreasing absolutely continuous funciton, the set $N$ must be the union of isolated points and intervals. In the case where $x \in N$ is an isolated point, $K$ is still a bijection around a neighborhood of $x$ so we can defined the velocity $u$ in the same way as above. Suppose that $K_x$ vanishes on an interval $I \subset N$, i.e., $K_x(T,x)=0$ for $x \in I$. Then we have $K(T,x)=x^\ast$ for some $x^\ast \in S^1$ on $I$. Also, $\rho(T,x)=0$ on that interval and this implies that $G_x(T,x)=0$ since $G_x=2\rho\rho_t$ by differentiating in \eqref{Lagvar}. Then $G(T,x^\ast)$ is finite and is a constant on $I$, so we can define $u(T,x):=G(T,x^\ast)$ for $x \in I$. This completes the proof for $u$ being well-defined.

Once we define $u=G \circ K^{-1}$, we can show that its distributional derivative is $u_x=\frac{G_x}{K_x} \circ K^{-1}$. We omit the proofs for these and their regularity since it is exactaly the same argument as in \cite{L2}.
\end{proof}

\begin{remark}
When $K(T,x)=x^\ast$ on an interval $I$, we can think of this as a set of particles concentrating at one point $x^\ast$. This is consistent with the physical interpretation of the CH equation since it describes a compressible fluid motion. In this case, a set of particles concentrate at a point and moves in a same velocity.
\end{remark}

\subsection{Proof of Theorem 1}

We claim that $u$ is a weak solution of the equation \eqref{wCH}. We want to show that 
\begin{center}
${\displaystyle \int_{S^1 \times \RR_+}(u_t+u u_x) \phi\;dxdt=\int_{S^1 \times \RR_+}-p_x\phi\;dxdt}$ for all $\phi \in C_c^\infty(S^1 \times \RR_+)$,
\end{center}
where $p=\Lambda^{-1}(u^2+\frac{1}{2}u_x^2)$. We have
\begin{align*}
\int_{S^1 \times \RR_+}(u_t+u u_x) \phi\;dxdt=&\int_{S^1 \times \RR_+} -u\phi_t+uu_x\phi\;dxdt\\
=&\int_{S^1 \times \RR_+}-U K_x \phi_t \circ K+UU_x \phi \circ K\;dxdt,\;\text{where $U=u \circ K$}\\
=&\int_{S^1 \times \RR_+} U_t K_x \phi \circ K\;dxdt,
\end{align*}
since
\begin{align*}
\left(U K_x \phi \circ K\right)_t-\left(U^2 \phi \circ K\right)_x=&U_t K_x \phi \circ K-UU_x \phi \circ K+U K_x \phi_t \circ K,
\end{align*}
and $\phi$ has a compact support. Since $U_t=-\partial_x\Lambda^{-1}\left[u^2+\frac{1}{2}u_x^2\right] \circ K$, we get
$$\int_{S^1 \times \RR_+} U_t K_x \phi \circ K\;dxdt=\int_{S^1 \times \RR_+}-p_x\phi\;dxdt,$$
by another change of variables. Finally, the conservation of $H^1$ energy of the weak solution comes from rewriting all quantities appearing in \eqref{conservation} in $u$ variables. That is,
\begin{align*}
\int_{S^1} u^2+u_x^2 dx=&\int_{K(A)}\left(G \circ K^{-1}\right)^2+\left(\frac{G_x}{K_x}\circ K^{-1}\right)^2 dx\\
=&\int_{A} G^2 K_x+\frac{G_x^2}{K_x}dx\\
=&\int_{S^1} G^2 \rho^2+4\rho_t^2 dx
\end{align*}
is conserved for almost all time $t$. This completes the proof of the Theorem 1.

\subsection{Proof of Theorem 2}

We can observe that by providing extra smoothness on $\rho$ and $\rho_t$ variables, we can improve the smoothness of the $G$. In fact, the same estimate \eqref{estimate} will continue to work with $L^2$ norms replaced by $C^k$ norms since the spatial domain is compact. Hence, the solutions for the Cauchy problem \eqref{CH3} in $\rho$ variables will be global in $C^k$ spaces. Consequently, the Lagrangian variables $K$ and $G$ will be $C^{k+1}$ on $S^1$ for all time whenever $\rho$ and $\rho_t$ is $C^k$. This completes the proof of the Theorem 2.

\begin{remark}
Theorem 2 improves the result of McKean \cite{M2}. The difference between the current work and Mckean's work is that we don't need the assumption for the momentum $m=u-u_{xx}$ to satisfy $m, m_x \in H^1$ initially. Our result shows that that for each $k$, $K$ is exactly in $C^k$ whenever the initial condition $u_0$ is in $C^k$.
\end{remark}

\section{Future Research}

The global weak continuation of the CH equation in this research suggests that there might be a general theory that explains why it works. We can first suspect that the metrics corresponding to the HS and CH equations are close. That is, in the space of all Riemannian metrics on the group of diffeomorphisms, the Sobolev $H^1$ metric can be regarded as a nonlinear perturbation of $\dot{H}^1$ metric, where the global weak continuation property is a consequence of the robustness of the perturbation. However, the geometry of two diffeomorphism groups are different since the sectional curvature of $\dot{H}^1$ metric is a positive constant, whereas the $H^1$ metric has sectional curvature positive in `most directions' but also assumes negative sign(see \cite{L1, Mis1998})

\medskip

We can apply the change of variable technique in this paper to other Euler-Arnold equations or generalize this Lagrangian change of variable technique as well. In particular, we can consider the Wunsch equation which is the Euler-Arnold equation with a right invariant $\dot{H}^{1/2}$ metric:
\begin{equation}\label{wunsch}
\left\{
  \begin{array}{l l}
       \omega_t+u \omega_x+2\omega u_x=0,\\
       \omega=Hu_x,\;\text{$H$ is the Hilbert transform,}
   \end{array} \right.
\end{equation}
It was studied by Bauer-Kolev-Preston \cite{BKP} as a 1 dimensional vorticity equation for the 3 dimensional Euler's equation. The solution of Wunsch equation forms a finite time singularity along a particle trajectory due to wave breaking and the blow up result was further extended in the framework of Teichm\"{u}ller theory by Preston-Washabaugh \cite{PW}.
In this case, we can instroduce a new variable $Z=u_x+ i\omega$ to get
$$Z_t+uZ_x+Z^2=-F,$$
where $F$ is some positive function. In this framework, we can study the geodesic equation for the Sobolev $\dot{H}^s$ metric for $\frac{1}{2}<s<1$ in a uniform way. Since the sign change of the vorticity is the crucial assumption to get the blow up in all known cases, we can analyze the second-order ODE in this new Lagrangian variable $Z$ to find a direct proof of the blow up.

\medskip

Lastly, it would be interesting to interpret the global weak solution of the CH equation constructed in this paper in the context of optimal transport. Recently, Gallou\"{e}t-Vialard formulated generalized CH equation as an Euler-Arnold equation which can be identified as a particular solution of the incompressible Euler's equation on the group of homeomorphisms on $\RR^2 \setminus \{0\}$. We can investigate how the blow up and the global weak continuation of the CH equation can be described in the language of optimal transport, and understand the role of the change of variables \eqref{rho}.


\begin{thebibliography}{10}

\bibitem{AK1999}
V.~Arnold~and~B.~Khesin,
	\newblock{Topological methods in hydrodynamics},
	\newblock{\em Springer Science \& Business Media}, Vol. 125, 1999

\bibitem{BC1}
A.~Bressan,~A.~Constantin,
	\newblock{Global conservative solutions of the Camassa-Holm equation},
	\newblock{\em Archive for Rational Mechanics and Analysis}, 183(2), 215--239, 2007

\bibitem{BC2}
A.~Bressan,~A.~Constantin,
	\newblock{Global dissipative solutions of the Camassa-Holm equation},
	\newblock{\em Analysis and Applications}, 5(01), 1--27, 2007

\bibitem{BF}
A.~Bressan,~M.~Fonte,
	\newblock{An Optimal Transportation Metric for Solutions of the Camassa-Holm Equation},
	\newblock{\em Methods and Applications of Analysis}, 12(2), 191--220, 2005

\bibitem{BKP}
M.~Bauer,~B.~Kolev,~S.~C.~Preston,
	\newblock{Geometric investigations of a vorticity model equation},
	\newblock{\em Journal of Differential Equations}, 260(1), 478-516, 2016

\bibitem{BSS2000}
	\newblock R. Beals, D. Sattinger and J. Szmigielski,
	\newblock Multipeakons and the classical moment problem,
	\newblock \emph{Advances in Mathematics}, 154(2) (2000), 229--257.

\bibitem{CH1993}
R.~Camassa~and~D.~D.~Holm,
	\newblock{An integrable shallow water equation with peaked solitons},
	\newblock{\em Physical Review Letters}, 71(11):1661--1664, 1993

\bibitem{Con}
\newblock A. Constantin,
	\newblock{Existence of permanent and breaking waves for a shallow water equation: A geometric approach},
	\newblock{\em Annales de l'institut Fourier}, Vol. 50, 321--362, 2000

\bibitem{ConE2}
	\newblock A. Constantin and J. Escher,
	\newblock Wave breaking for nonlinear nonlocal shallow water equations,
	\newblock \emph{Acta Mathematica}, 181(2) (1998), 229--243

\bibitem{ConE3}
	\newblock A. Constantin and J. Escher,
	\newblock On the blow-up rate and the blow-up set of breaking waves for a shallow water equation,
	\newblock \emph{Mathematische Zeitschrift}, 233(1) (2000), 75--91

\bibitem{Con2}
	\newblock A. Constantin,
	\newblock On the inverse spectral problem for the Camassa-Holm equation,
	\newblock \emph{Journal of Functional Analysis}, 155(2) (1998), 352--363.

\bibitem{ConM1999}
	\newblock A. Constantin and H. McKean,
	\newblock A shallow water equation on the circle,
	\newblock \emph{Communications on Pure and Applied Mathematics}, 52(8) (1999), 949--982.

\bibitem{ConStra}
	\newblock A. Constantin and W. A. Strauss,
	\newblock Stability of peakons,
	\newblock \emph{Communications on Pure and Applied Mathematics}, 53(5) (2000), 603--610.

\bibitem{ConStra2}
	\newblock A. Constantin and W. A. Strauss,
	\newblock Stability of the Camassa-Holm solitons,
	\newblock \emph{Journal of Nonlinear Science}, 12(4) (2002), 415--422.

\bibitem{EM1970}
D.~Ebin~and~J.~Marsden,
	\newblock{Groups of diffeomorphisms and the motion of an incompressible fluid},
	\newblock{\em Annals of Mathematics}, 102--163, 1970

\bibitem{FF1981}
B.~Fuchssteiner~and~A.~Forkas,
	\newblock{Symplectic structures, their B{\"a}cklund transformations and hereditary symmetries},
	\newblock{\em Physica D: Nonlinear Phenomena}, 4(1):47--66, 1981

\bibitem{GV}
T.~Gallou\"{e}t,~F-X.~Vialard,
	\newblock{From unbalanced optimal transport to the Camassa-Holm equation},
	\newblock{arXiv:1609.04006v2}

\bibitem{H}
P.~Hartman,
	\newblock{Ordinary differential equations},
	\newblock{Birkh\"{a}user}, 1982

\bibitem{HR1}
H.~Holden~and~X.~Raynaud,
	\newblock{Global conservative solutions of the Camassa-Holm equation - a Lagrangian point of view},
	\newblock{\em Communications in Partial Differential Equations}, 32(10), 1511--1549, 2007

\bibitem{HR2}
H.~Holden~and~X.~Raynaud,
	\newblock{Global conservative multipeakon solutions of the Camassa-Holm equation},
	\newblock{\em Journal of Hyperbolic Differential Equations}, 4(01), 39--64, 2007

\bibitem{HR3}
H.~Holden~and~X.~Raynaud,
	\newblock{Global conservative solutions of the generalized hyperelastic-rod wave equation},
	\newblock{\em Journal of Differential Equations}, 233(2), 448--484, 2007

\bibitem{HR4}
H.~Holden~and~X.~Raynaud,
	\newblock{Periodic conservative solutions of the Camassa-Holm equation (Solutions périodiques conservatives de l’équation de Camassa-Holm)},
	\newblock{\em Annales de l'institut Fourier}, 58(3), 945--988, 2008

\bibitem{HR5}
H.~Holden~and~X.~Raynaud,
	\newblock{Dissipative solutions for the Camassa-Holm equation},
	\newblock{\em Discrete and Continuous Dynamical Systems}, 24(4), 1047--1112, 2009

\bibitem{GHR1}
K.~Grunert,~H.~Holden,~and~X.~Raynaud,
	\newblock{Lipschitz metric for the periodic Camassa-Holm equation},
	\newblock{\em Journal of Differential Equations}, 250(3), 1460--1492, 2011

\bibitem{GHR2}
K.~Grunert,~H.~Holden,~and~X.~Raynaud,
	\newblock{Lipschitz metric for the Camassa-Holm equation on the line},
	\newblock{\em Discrete and Continuous Dynamical Systems - Series A}, 2013

\bibitem{GHR3}
K.~Grunert,~H.~Holden,~and~X.~Raynaud,
	\newblock{Global conservative solutions to the Camassa-Holm equation for initial data with nonvanishing asymptotics},
	\newblock{\em Discrete and Continuous Dynamical Systems - Series A}, 32(12), 4209--4227, 2012

\bibitem{GHR4}
K.~Grunert,~H.~Holden,~and~X.~Raynaud,
	\newblock{Global solutions for the two-component Camassa-Holm system},
	\newblock{\em Communications in Partial Differential Equations}, 37(12), 2245--2271, 2012

\bibitem{K2007}
	\newblock B. Kolev,
	\newblock Bi-Hamiltonian systems on the dual of the Lie algebra of vector fields of the circle and periodic shallow water equations,
	\newblock \emph{Philosophical Transactions of the Royal Society of London A: Mathematical, Physical and Engineering Sciences}, 365(1858) (2007), 2333-2357.

\bibitem{KM2003}
	\newblock B. Khesin and G. Misio{\l}ek,
	\newblock Euler equations on homogeneous spaces and Virasoro orbits,
	\newblock \emph{Advances in Mathematics},176(1) (2003), 116--144.

\bibitem{L1}
J.~Lenells,
	\newblock{The Hunter-Saxton equation describes the geodesic flow on a sphere},
	\newblock{\em Journal of Geometry and Physics}, 57(10) (2007): 2049--2064.

\bibitem{L2}
J.~Lenells,
	\newblock{Weak geodesic flow and global solutions of the Hunter-Saxton equation},
	\newblock{\em Discrete and Continuous Dynamical Systems - Series A}, 18(4), 643--656, 2007

\bibitem{M1}
H.~McKean,
	\newblock{Breakdown of a shallow water equation},
	\newblock{\em Asian Journal of Mathematics}, Vol. 2, 867--874, 1998

\bibitem{M2}
H.~McKean,
	\newblock{Fredholm determinants and the Camassa‐Holm hierarchy},
	\newblock{\em Communications on Pure and Applied Mathematics}, 56(5), 638--680, 2003

\bibitem{Mis1998}
G.~Misiolek,
	\newblock{Classical solutions of the periodic Camassa-Holm equation},
	\newblock{\em Geometric \& Functional Analysis GAFA}, 12(5), 1080--1104, 2002

\bibitem{PW}
S~.C.~Preston,~P.~Washabaugh,
	\newblock{Global Existence and Blowup for Geodesics in Universal Teichm\"{u}ller Spaces},
	\newblock{arXiv:1603.07061}

\bibitem{XZ1}
Z.~Xin~and~P.~Zhang,
	\newblock{On the weak solutions to a shallow water equation},
	\newblock{Communications on Pure and Applied Mathematics}, 53(11), 1411--1433, 2000


\end{thebibliography}
\end{document}